\theoremstyle{plain}
\numberwithin{equation}{section}
\newtheorem{thm}{Theorem}[section]
\newcommand{\lbeq}[1]{\label{eq:#1}}
\newcommand{\N}{{\mathbb N}}
\newcommand{\nn}{\nonumber}
\newcommand{\PSCA}{P^{\sss\rm SCA}}
\newcommand{\piG}{\pi^{\sss\rm G}}
\newcommand{\refeq}[1]{(\ref{eq:#1})}
\newcommand{\rhoTM}{\rho_{\sss\rm TM}}
\newcommand{\sss}{\scriptscriptstyle}
\newcommand{\vep}{\varepsilon}
\newcommand{\Evec}  {\textbf{E}}
\newcommand{\Pvec}  {\textbf{P}}
\newcommand{\qvec}  {\boldsymbol{q}}
\newcommand{\sigmavec}  {\boldsymbol{\sigma}}
\newcommand{\tauvec}  {\boldsymbol{\tau}}
\begin{document}
\date{}
\title{\LARGE \textbf{Stochastic optimization: Glauber dynamics versus stochastic cellular automata}
}
\author{
Bruno Hideki Fukushima-Kimura${}^{1}$,
Yoshinori Kamijima${}^{2}$,
Kazushi Kawamura${}^{3}$ and
Akira Sakai${}^{1}$\\[1ex]
\small ${}^1$Faculty of Science, Hokkaido University \\ 
\small Kita 10, Nishi 8, Kita-ku, Sapporo, Hokkaido 060-0810, Japan\\
\small ${}^2$ National Center for Theoretical Sciences (NCTS) \\
\small No. 1, Sec. 4, Roosevelt Rd., Taipei City 106, Taiwan\\
\small ${}^3$Institute of Innovative Research, Tokyo Institute of Technology \\
\small Nagatsuta-cho 4259-J3-30, Midori-ku, Yokohama, Kanagawa 226-8503, Japan\\
\smallskip
E-mail: \texttt{bruno@math.sci.hokudai.ac.jp, kamijima@ncts.ntu.edu.tw,}\\
\texttt{kawamura@artic.iir.titech.ac.jp, sakai@math.sci.hokudai.ac.jp}
}

\maketitle
\thispagestyle{empty}
\abstract{
The topic we address in this paper concerns the minimization of a Hamiltonian function for an Ising model through the application of simulated annealing algorithms based on (single-site) Glauber dynamics and stochastic cellular automata (SCA).  Some rigorous results are presented in order to justify the application of simulated annealing for a particular kind of SCA. After that, we compare the SCA algorithm and its variation, namely the $\vep$-SCA algorithm, studied in this paper with the Glauber dynamics by analyzing their accuracy in obtaining optimal solutions for the max-cut problem on Erd\" os-R\'enyi random graphs, the traveling salesman problem (TSP), and the minimization of Gaussian and Bernoulli spin glass Hamiltonians. We observed that the SCA performed better than the Glauber dynamics in some special cases, while the $\vep$-SCA showed the highest performance in all scenarios. 
}

\section{Introduction}\label{sec:intro}
In several problems involving complex networks, there has been a growing demand to provide algorithms that can offer optimal solutions for large-scale combinatorial problems within a relatively short amount of time. Due to the fact that no polynomial-time algorithms for NP-hard problems are known \cite{gj79}, an alternative approach is necessary.  It follows from the fact that a considerable amount of problems of practical interest can be mapped into the problem of finding a ground state of an Ising model (e.g., \cite{BGJR88, c85, l14, Drug}), there are several algorithms being developed recently aiming at approaching such ground states, see \cite{A2019, GAT21, GTD19, osky19, statica}. 

The robustness of the statistical mechanical framework has received a lot of attention in recent decades, and a stochastic approach to combinatorial problems has been widely employed. The approach adopted in this paper\footnote{The material of this paper was partially presented at the 53rd Annual Conference of the Institute of Systems, Control and Information Engineers (SCI’21) which was held in October, 2021} continues the one discussed in \cite{SSS21}, and we extend some of its theoretical results and provide further examples. As in the previous work, our main problem of interest is the problem of determining one of the ground states (i.e., the points of global minima) of a Hamiltonian function $H$ for an Ising model on a finite simple graph $G=(V,E)$. 
Given a collection of spin-spin coupling constants  $(J_{x,y})_{x,y\in V}$ such that $J_{x,y} = J_{y,x}$, and
$J_{x,y}=0$ if $\{x,y\}\notin E$, and local external fields 
$(h_x)_{x\in V}$, let us consider the Ising Hamiltonian defined by
\begin{align}\label{eq:Hamiltonian}
H(\sigmavec)=-\frac12\sum_{x,y\in V}J_{x,y}\sigma_x\sigma_y-\sum_{x\in V}h_x
 \sigma_x
\end{align}
for each spin configuration $\sigmavec = (\sigma_x)_{x\in V}\in\{-1,+1\}^V$. 

Approximating an Ising Hamiltonian's ground states can be achieved by applying simulated annealing based on a Markov chain Monte Carlo spin-flip dynamics. The most usual example consists of considering a time-inhomogeneous Markov chain based on a single spin-flip dynamics (such as the Glauber or Metropolis dynamics \cite{g63}) while slowly decreasing its temperature towards zero. Some theoretical results (e.g., \cite{Cat92, h88}) show that if the temperature is decreased in time $t$ at a certain speed rate, such a procedure yields an approximation to the minimum value of $H$. In \cite{osky19, statica}, the authors considered simulated annealing based on parallel spin-flip dynamics that, differently from the usual method, allows the system to update multiple spins independently, and simulations showed that it manages to converge to the ground states significantly faster compared to the single spin-flip methods.
In addition to these observations, in \cite{SCA21} the authors investigated from the mathematical point of view a specific kind of stochastic cellular automata (SCA) which was derived from \cite{dss12,st18}, in order to determine sufficient conditions under which the convergence to the ground states is guaranteed. 

As introduced in \cite{SCA21, hkks19, dss12, st18}, the SCA transition kernel at inverse temperature $\beta \geq 0$ for a collection of (non-negative) pinning parameters $\qvec = (q_x)_{x\in V}$ is given by
\begin{align}\lbeq{PSCAdef}
\PSCA_{\beta,\qvec}(\sigmavec,\tauvec)=\prod_{x\in V}\frac{e^{\frac\beta2(\tilde h_x(\sigmavec)+q_x\sigma_x)
 \tau_x}}{2\cosh(\frac\beta2(\tilde h_x(\sigmavec)+q_x\sigma_x))}
\end{align}
for any  spin configurations $\sigmavec,\tauvec$  in $\{-1, +1\}^V$, where the cavity fields $\tilde h_x$ are defined by letting $\tilde h_x(\sigmavec)=\sum_{y\in V}J_{x,y}\sigma_y+h_x$. We immediately notice that the right-hand side of the equation above is expressed in terms of a product of local probabilities $p_{x,q}(\cdot |\sigmavec)$ given by
\begin{equation}
p_{x,q}(s |\sigmavec) = \frac{e^{\frac\beta2(\tilde h_x(\sigmavec)+q\sigma_x)s}}{2\cosh(\frac\beta2(\tilde h_x(\sigmavec)+q\sigma_x))} 
\end{equation} 
for $s \in \{-1,+1\}$. Therefore, whenever we update the state of the system, all spins 
are updated simultaneously and independently according to a probabilistic rule for each vertex that depends on the current spin configuration. Such characteristics imply that the system can potentially
transition from any spin configuration to another in a single step, contrasting with the standard Glauber and Metropolis dynamics.
Note that due to the presence of the pinning parameters in the local transition probabilities, the SCA tends to flip a smaller amount of spins from certain configurations, especially when such parameters are taken sufficiently large and the temperature approaches zero. It follows that, for SCA-based simulated annealing algorithms, such an effect typically slows down the dynamics, which may prevent the system from reaching lower energetic configurations, however, as we observe in this paper (see also \cite{SSS21}), such algorithms still performs better than Glauber dynamics with regards to the success rate of reaching a ground state in certain scenarios.

In addition to the SCA defined in \refeq{PSCAdef}, let us also consider the so-called $\vep$-SCA. First introduced in \cite{SCA21, SSS21}, the $\vep$-SCA is also a particular kind of probabilistic cellular automata, but with the advantage of having attenuated temperature-dependent pinning effects, which encourages a larger number of spin-flips at low temperatures compared to the SCA.
Given the inverse temperature $\beta \geq 0$ and a parameter $\vep \in (0,1]$, the 
transition kernel of the $\vep$-SCA is given by
\begin{equation}
P_{\beta,\vep}(\sigmavec,\tauvec) = \prod_{x: \,\tau_x = - \sigma_x }\big(\vep p_x(\sigmavec)\big)\prod_{y: \, \tau_y = \sigma_y }
 \big(1-\vep p_y(\sigmavec)\big)
\end{equation}
for every pair $\sigmavec,\tauvec$ of configurations in $\{\-1,+1\}^V$, where $p_x(\sigmavec)$ is defined by
\begin{equation}\lbeq{flipprobability}
p_x(\sigmavec) = \frac{e^{-\frac\beta2\tilde h_x(\sigmavec) \sigma_x}}
 {2\cosh(\frac\beta2 \tilde h_x(\sigmavec))}.
\end{equation} 
Note that $p_x(\sigmavec)$ coincides with $p_{x,0} (-\sigma_x| \sigmavec)$, so this algorithm can be interpreted in the following way: the spins eligible to be flipped are randomly assigned with probability $\vep$ independently of each other, then, only those spins which were assigned will be updated simultaneously and independently, where the probability of flipping the spin $\sigma_x$ is $p_x(\sigmavec)$, while the unassigned spins remain unchanged.  Note that $P_{\beta, \vep}$ can also be written as a product of local transition probabilities just like in equation \refeq{PSCAdef}, where each of the corresponding local probabilities  $\tilde{p}_{x,\vep}(\cdot |\sigmavec)$ is given by
\begin{equation}\label{eq:elocal}
\tilde{p}_{x, \vep}(s |\sigmavec) = (1 - \vep) \delta_{\sigma_x, s} + \vep p_{x,0}(s | \sigmavec)
\end{equation} 
for each $s \in \{-1, +1\}$.

Differently from the SCA and some of the well-established single spin-flip dynamics, there is still no solid mathematical theory that describes in which circumstances a simulated annealing algorithm based on the $\vep$-SCA can be successfully applied such as in Theorem \ref{thm:SAforSCA}. Even though, in recent investigations, e.g., \cite{SCA21, SSS21}, the $\vep$-SCA outperformed the tested algorithms in all scenarios, managing to reach lower energy configurations and the ground states at a higher success rate, provided the parameter $\vep$ was chosen appropriately. In this paper, we provide a natural extension for the preliminary results from \cite{SSS21}, where we derive a new result concerning an upper bound for the mixing time of the $\vep$-SCA,  and test the algorithms by applying them to a larger class of problems, including the traveling salesman problem (TSP).

\section{Mathematical results}\label{sec:MT}
In this section, we summarize some of the rigorous results which are known for the SCA and the $\vep$-SCA. Furthermore, we also provide an upper bound for the mixing time of the $\vep$-SCA. For the readers interested in the proofs concerning to the SCA,  refer to \cite{SCA21}. The first mathematical results, Theorems \ref{thm:mixing} and \ref{thm:emixing},  show that the mixing time $t_{mix}(\delta)$ (see definition in \cite{lp17}), that is, the time the system takes to be close to equilibrium, is bounded above by a quantity which is proportional to $\log|V|$ as long as the temperature is taken sufficiently high. Such results show that both the SCA and the $\vep$-SCA reach the equilibrium faster than Glauber dynamics which is known to be at least proportional to $|V| \log |V|$, see \cite{DP11, HS07}. 

\begin{thm}[Mixing time \cite{SCA21}]\label{thm:mixing}
For any non-negative pinning parameters $\qvec = (q_x)_{x \in V}$, if $\beta$ is sufficiently small so that
\begin{align}\lbeq{mixingcond}
r\equiv\max_{x\in V}\bigg(\tanh\frac{\beta q_x}2+\sum_{y\in V}\tanh\frac{\beta
 |J_{x,y}|}2\bigg)
 <1,
\end{align}
then, the mixing time $t_{\rm mix}(\delta)$ satisfies
\begin{equation}
t_{\rm mix}(\delta)\le\bigg\lceil\frac{\log|V|-\log\delta}{\log(1/r)}\bigg\rceil.
\end{equation}
\end{thm}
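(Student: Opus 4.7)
My plan is to use path coupling in the spirit of Bubley--Dyer on the Hamming metric. Because the SCA kernel $\PSCA_{\beta,\qvec}(\sigmavec,\cdot)$ factorizes as a product of independent one-site Bernoulli updates, one can couple the chains started from $\sigmavec$ and $\sigmavec'$ by taking the optimal coupling of the two Bernoulli laws at each vertex $x$ independently. The key quantity to control is then
\begin{equation*}
\mE\bigl[d_H(\tauvec,\tauvec')\bigr]=\sum_{x\in V}\tvdist{p_{x,q_x}(\cdot\mid\sigmavec)-p_{x,q_x}(\cdot\mid\sigmavec')},
\end{equation*}
where $d_H$ denotes Hamming distance. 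By path coupling it suffices to bound this when $\sigmavec$ and $\sigmavec'$ differ at a single vertex $y$.

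First I would compute the TV distance of the two Bernoulli laws at a generic site $x$. Writing $h=\tfrac\beta2(\tilde h_x(\sigmavec)+q_x\sigma_x)$ and $h'=\tfrac\beta2(\tilde h_x(\sigmavec')+q_x\sigma'_x)$, the TV distance equals $\tfrac12|\tanh h-\tanh h'|$, and the elementary identity $|\tanh h-\tanh h'|\le2\tanh(|h-h'|/2)$ (from $\tanh$ being a contraction combined with the subadditivity of $\tanh$) gives $\tvdist{\,\cdot\,}\le\tanh(|h-h'|/2)$. For $x\ne y$ one has $\sigma'_x=\sigma_x$, so $|h-h'|=\tfrac\beta2|J_{x,y}(\sigma'_y-\sigma_y)|=\beta|J_{x,y}|$, yielding the bound $\tanh(\beta|J_{x,y}|/2)$. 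For $x=y$ the cavity field is unchanged (since $J_{y,y}=0$) while $q_y\sigma_y$ flips sign, so $|h-h'|=\beta q_y$, yielding the bound $\tanh(\beta q_y/2)$. Summing,
\begin{equation*}
\mE\bigl[d_H(\tauvec,\tauvec')\bigr]\le\tanh\frac{\beta q_y}{2}+\sum_{x\in V}\tanh\frac{\beta|J_{x,y}|}{2}\le r.
\end{equation*}

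The path coupling lemma then extends this contraction to arbitrary pairs: for any $\sigmavec,\sigmavec'$ one obtains a coupling with $\mE[d_H(\tauvec,\tauvec')]\le r\,d_H(\sigmavec,\sigmavec')$, and iterating gives $W_1(\PSCA_{\beta,\qvec}{}^t(\sigmavec,\cdot),\PSCA_{\beta,\qvec}{}^t(\sigmavec',\cdot))\le r^t d_H(\sigmavec,\sigmavec')\le r^t|V|$. Since Hamming-Wasserstein dominates total variation, choosing $t$ so that $r^t|V|\le\delta$, i.e.\ $t\ge(\log|V|-\log\delta)/\log(1/r)$, finishes the proof after taking the ceiling.

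The main technical step is the tanh-difference estimate $|\tanh h-\tanh h'|\le2\tanh(|h-h'|/2)$; everything else is the standard path coupling machinery once condition \refeq{mixingcond} has been identified as the right contraction threshold. A minor subtlety is confirming that the independent optimal couplings of the Bernoulli marginals indeed realize the coupling bound above, which follows because the joint TV distance of a product measure is at most the sum of marginal TV distances.
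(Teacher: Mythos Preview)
Your proposal is correct and follows essentially the same path-coupling argument as the paper (which defers the proof of Theorem~\ref{thm:mixing} to \cite{SCA21} but reproduces the method verbatim in the proof of Theorem~\ref{thm:emixing}): couple the product-form SCA updates sitewise by the optimal/monotone Bernoulli coupling, reduce via path coupling to configurations at Hamming distance~$1$, and use $|\tanh(a+b)-\tanh(a-b)|\le 2\tanh|b|$ (equivalently your $|\tanh h-\tanh h'|\le 2\tanh(|h-h'|/2)$, obtained by setting $a=(h+h')/2$, $b=(h-h')/2$) to get the contraction factor $r$. The only cosmetic difference is that the paper writes the coupling explicitly via uniform thresholds $U_y$, whereas you invoke the optimal Bernoulli coupling abstractly; these are the same construction.
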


\begin{thm}[Mixing time of the  $\vep$-SCA]\label{thm:emixing}
For any positive $\vep \in (0,1]$, if $\beta$ is sufficiently small such that
\begin{align}\lbeq{mixingcond2}
r\equiv  (1-\vep) + \vep \max_{x\in V}\bigg(\sum_{y\in V}\tanh\frac{\beta
 |J_{x,y}|}2\bigg)
 <1,
\end{align}
then, the mixing time $t_{\rm mix}(\delta)$ satisfies
\begin{equation}
t_{\rm mix}(\delta)\le\bigg\lceil\frac{\log|V|-\log\delta}{\log(1/r)}\bigg\rceil.
\end{equation}
\end{thm}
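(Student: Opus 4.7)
The plan is to adapt the path coupling argument behind Theorem \ref{thm:mixing} to the $\vep$-SCA kernel, exploiting the structure \refeq{elocal}. Writing $d(\sigmavec,\sigmavec')$ for the Hamming distance on $\{-1,+1\}^V$, by the path coupling theorem (e.g., Theorem 14.6 in \cite{lp17}) it suffices to exhibit a one-step coupling $(\tauvec,\tauvec')$ of two copies of the chain started from any pair $\sigmavec,\sigmavec'$ differing at a single site $z$ such that $\mE[d(\tauvec,\tauvec')] \leq r$. Since the diameter of $(\{-1,+1\}^V,d)$ equals $|V|$, the stated mixing time bound follows in the usual way.

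The coupling I would use is the natural two-stage one suggested by \refeq{elocal}. At each site $x\in V$, draw an independent Bernoulli$(\vep)$ variable $U_x$, shared between the two chains, to decide whether $x$ is assigned. If $U_x=0$, set $\tau_x=\sigma_x$ and $\tau_x'=\sigma_x'$. If $U_x=1$, draw $(\tau_x,\tau_x')$ from the optimal coupling of the local laws $p_{x,0}(\cdot\,|\,\sigmavec)$ and $p_{x,0}(\cdot\,|\,\sigmavec')$, so that $\mP[\tau_x\neq\tau_x' \,|\, U_x=1]$ equals their total variation distance.

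Next I would bound the per-site disagreement probability. At the discrepancy site $z$, the simple-graph hypothesis gives $J_{z,z}=0$, hence $\tilde h_z(\sigmavec)=\tilde h_z(\sigmavec')$, and the two local laws coincide; thus $\tau_z=\tau_z'$ whenever $U_z=1$, while the discrepancy trivially persists when $U_z=0$, contributing $1-\vep$. At any $x\neq z$, disagreement is possible only when $U_x=1$, in which case $\mP[\tau_x\neq\tau_x' \,|\, U_x=1] = \tfrac12|\tanh A - \tanh A'|$ with $A=\tfrac{\beta}{2}\tilde h_x(\sigmavec)$, $A'=\tfrac{\beta}{2}\tilde h_x(\sigmavec')$ and $|A-A'|=\beta|J_{x,z}|$. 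The elementary inequality $|\tanh A-\tanh A'|\leq 2\tanh(|A-A'|/2)$, which follows from $\cosh A\cosh A'\geq\cosh^2\!\tfrac{A-A'}{2}$ combined with the double-angle identity for $\sinh$, bounds this contribution by $\vep\tanh(\beta|J_{x,z}|/2)$.

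Summing over $x\in V$ (the $y=z$ summand vanishes since $J_{z,z}=0$) yields
\[
\mE[d(\tauvec,\tauvec')] \leq (1-\vep) + \vep\sum_{y\in V}\tanh\frac{\beta|J_{z,y}|}{2} \leq r
\]
by hypothesis \refeq{mixingcond2}. The only real content is the $\tanh$ comparison inequality and the observation that the discrepancy site $z$ admits a perfect coupling under $U_z=1$; the rest is standard path coupling bookkeeping, and I expect no substantive obstacle. If anything were to go wrong, it would be at the latter observation, which relies crucially on the absence of self-loops so that $\tilde h_z$ does not depend on $\sigma_z$; otherwise one would have to carry an extra $\tanh(\beta|J_{z,z}|/2)$ term analogous to the $\tanh(\beta q_x/2)$ appearing in Theorem \ref{thm:mixing}.
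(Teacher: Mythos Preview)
Your proposal is correct and follows essentially the same path-coupling argument as the paper: reduce to neighboring configurations, couple the local updates site by site, obtain a $1-\vep$ contribution at the discrepancy site (using $J_{z,z}=0$) and $\vep\tanh(\beta|J_{x,z}|/2)$ contributions elsewhere via the $\tanh$ inequality. The only cosmetic difference is that the paper realizes the coupling through a single uniform-threshold per site on the combined law $\tilde p_{y,\vep}(+1|\cdot)$, whereas you separate the Bernoulli$(\vep)$ assignment from the optimal coupling of the $p_{y,0}$ laws; the resulting per-site disagreement probabilities, and hence the bound, coincide.
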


\begin{proof}
Following the same strategy as adopted in the proof of \cite[Proposition 2.2]{SCA21}, it is sufficient to prove that $\rhoTM(P_{\beta,\vep}(\sigmavec,\cdot),P_{\beta, \vep}(\tauvec,\cdot))\le r$ holds for any configurations $\sigmavec,\tauvec$ satisfying 
$|D_{\sigmavec,\tauvec}|=1$, where $\rhoTM$ stands for the transportation metric and $D_{\sigmavec,\tauvec}=\{x\in V: \sigma_x\ne\tau_x\}$.  

Let us denote the local probabilities $\tilde{p}_{y,\epsilon}(+1|\sigmavec)$ from equation (\ref{eq:elocal}) simply by $p(\sigmavec,y)$, and let us assume that $D_{\sigmavec,\tauvec}=\{x\}$, that is, $\tauvec$ is the configuration $\tauvec=\sigmavec^x$ which coincides with $\sigmavec$ apart from its value at $x$.  
Note that $p(\sigmavec,y) \neq p(\sigmavec^x,y)$ only if $y=x$ or 
$y\in N_x\equiv\{v\in V:J_{x,v}\ne0\}$.  Using this as a threshold function for 
i.i.d.~uniform random variables $(U_y)_{y\in V}$ on $[0,1]$, we define the 
coupling $(X,Y)$ of $P_{\beta,\vep}(\sigmavec,\cdot)$ and 
$P_{\beta,\vep}(\sigmavec^x,\cdot)$ as
\begin{align}\lbeq{coupling-SCA}
X_y=\begin{cases}
 +1 & [U_y\le p(\sigmavec,y)],\\
 -1 & [U_y>p(\sigmavec,y)],
\end{cases}
\end{align}
and
\begin{align}
Y_y=\begin{cases}
 +1 & [U_y\le p(\sigmavec^x,y)],\\
 -1 & [U_y>p(\sigmavec^x,y)].
\end{cases}
\end{align}
If we denote the probability measure of this coupling by $\Pvec_{\sigmavec,\sigmavec^x}$ and 
its expectation by $\Evec_{\sigmavec,\sigmavec^x}$, then, it follows that
\begin{align}
\Evec_{\sigmavec,\sigmavec^x}[|D_{X,Y}|] 
= &  |p(\sigmavec,x)-p(\sigmavec^x,x)| \nn \\
&+ \sum_{y\in N_x} |p(\sigmavec,y)-p(\sigmavec^x,y)|, 
\end{align}
where, 
\begin{align}
|p(\sigmavec,x)-p(\sigmavec^x,x)| = 1- \vep,
\end{align}
and for $y\in N_x$ we have
\begin{align}
|p(\sigmavec,y)-p&(\sigmavec^x,y)| \nn   \\
\le \frac{\vep}{2}\bigg|&\tanh\bigg(\frac{\beta(\sum_{v \ne x}J_{v,y}\sigma_v+h_y)}2+\frac{\beta J_{x,y}}2\bigg)\nn \\
&-\tanh\bigg(\frac{\beta(\sum_{v\ne x}J_{v,y}\sigma_v+h_y)}2 -\frac{\beta J_{x,y}}2\bigg)\bigg|.
\end{align}
Since $|\tanh(a+b)-\tanh(a-b)|\le2\tanh|b|$ for any $a,b$, then, we conclude that
\begin{align}
\rhoTM\Big(\PSCA_{\beta,\qvec}(\sigmavec,\cdot),\PSCA_{\beta,\qvec}(\sigmavec^x,
 \cdot)\Big)&\le\Evec_{\sigmavec,\sigmavec^x}[|D_{X,Y}|] \nn \\
 \le (1 - \vep)
 &+\vep \sum_{y\in N_x}\tanh\frac{\beta|J_{x,y}|}2\le r,
\end{align}
as required.
\end{proof}

The next result shows that if we consider the inhomogeneous Markov chain where the system is updated at time $t$ according to the SCA transition kernel at inverse temperature $\beta_t$ proportional to $\log t$, then, the algorithm will converge to the uniform distribution concentrated over the ground states of the Hamiltonian (\ref{eq:Hamiltonian}).

\begin{thm}[Simulated annealing \cite{SCA21}]\label{thm:SAforSCA}
Let us assume that each $q_x \geq \lambda/2$, where $\lambda$ is the largest eigenvalue of the matrix $[-J_{x,y}]_{x,y \in V}$. If we choose $\{\beta_t\}_{t\in\N}$ as
\begin{equation}
\beta_t=\frac{\log t}\Gamma,
\end{equation}
where
$\Gamma=\sum_{x \in V} \Gamma_x$ and 
$\Gamma_x =q_x+|h_x|+\sum_{y\in V}|J_{x,y}|$,
then,  for any initial $j\in\N$, we have
\begin{align}\label{SA:Sergodic}
\lim_{t\to\infty}\sup_\mu\big\|\mu\PSCA_{\beta_j,\qvec}\PSCA_{\beta_{j+1},
 \qvec}\cdots\PSCA_{\beta_t,\qvec}-\piG_\infty\big\|_{\sss\rm TV} = 0.
\end{align}
\end{thm}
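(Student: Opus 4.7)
The plan is to follow the classical Hajek--Holley--Stroock strategy for simulated annealing, adapted to the parallel, non-reversible SCA kernel. I would split the argument into three ingredients: (i) identification of the stationary distribution $\piSCA_\beta$ of $\PSCA_{\beta,\qvec}$ and the verification that $\piSCA_\beta\to\piG_\infty$ as $\beta\to\infty$; (ii) a uniform lower bound on the one-step transition probabilities that, under the schedule $\beta_t=\log t/\Gamma$, decays exactly like $1/t$; and (iii) combining these via weak and strong ergodicity of the inhomogeneous product $\PSCA_{\beta_j,\qvec}\cdots\PSCA_{\beta_t,\qvec}$.

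For (i), one uses the known representation of the SCA stationary measure as the marginal of a Gibbs measure on the two-replica space $\{-1,+1\}^V\times\{-1,+1\}^V$ with an extended Hamiltonian $\tilde H(\sigmavec,\tauvec)$ that couples the two copies through the $J_{x,y}$ and the pinning terms $q_x$ (see \cite{dss12, SCA21}). The hypothesis $q_x\ge\lambda/2$, with $\lambda$ the largest eigenvalue of $[-J_{x,y}]_{x,y\in V}$, is exactly what makes $\tilde H$ (after a constant shift) a positive semidefinite quadratic form in the difference $\sigmavec-\tauvec$, so that its global minima all lie on the diagonal $\{(\sigmavec,\sigmavec)\}$ and coincide with the ground states of $H$. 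Marginalizing and then sending $\beta\to\infty$ yields $\piSCA_\beta\to\piG_\infty$.

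For (ii), each factor of \refeq{PSCAdef} is bounded below using $|\tilde h_x(\sigmavec)+q_x\sigma_x|\le\Gamma_x$ and $\cosh u\le e^{|u|}$:
\begin{align}
\PSCA_{\beta,\qvec}(\sigmavec,\tauvec)\ge\prod_{x\in V}\frac{e^{-\beta\Gamma_x/2}}{2\,e^{\beta\Gamma_x/2}}=\frac{e^{-\beta\Gamma}}{2^{|V|}}.
\end{align}
With $\beta_t=\log t/\Gamma$ this lower bound equals $1/(t\,2^{|V|})$, so Doeblin's minorization against the uniform measure on $\{-1,+1\}^V$ gives a Dobrushin contraction coefficient $\delta(\PSCA_{\beta_t,\qvec})\le 1-1/t$. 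Since $\sum_t 1/t=\infty$, this is the Hajek-type condition guaranteeing weak ergodicity of the inhomogeneous product.

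For (iii), I would combine weak ergodicity with a bound on how fast the instantaneous equilibrium drifts: from the Gibbs form of $\piSCA_\beta$ one obtains $\tvdist{\piSCA_{\beta_{t+1}}-\piSCA_{\beta_t}}=O(\beta_{t+1}-\beta_t)=O(1/t)$, which together with the contraction estimate from (ii) telescopes (in the Holley--Stroock / Catoni fashion) into a strong-ergodicity estimate. Coupled with $\piSCA_{\beta_t}\to\piG_\infty$ from (i), this gives \refeq{SA:Sergodic} uniformly in the initial law $\mu$. The main technical obstacle is step (i): unlike single-spin-flip Glauber dynamics, SCA is not reversible in the ordinary sense on $\{-1,+1\}^V$, so identifying its invariant measure requires the two-replica lift, and showing that the ground states of $\tilde H$ collapse onto those of $H$ is precisely where the spectral bound $q_x\ge\lambda/2$ enters in an essential way. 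Once that is secured, the calibration of the cooling rate reduces to matching the minorization exponent $e^{-\beta\Gamma}$ to the $1/t$ threshold dictated by the Hajek condition.
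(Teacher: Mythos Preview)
The paper does not supply its own proof of this theorem: it is quoted verbatim from \cite{SCA21}, and the text explicitly directs the reader there (``For the readers interested in the proofs concerning to the SCA, refer to \cite{SCA21}''). So there is no in-paper argument to compare your outline against.

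That said, your three-step plan is the standard route and matches what the cited reference does. Step (ii) is exactly right: the bound $|\tilde h_x(\sigmavec)+q_x\sigma_x|\le\Gamma_x$ gives the uniform minorization $\PSCA_{\beta,\qvec}(\sigmavec,\tauvec)\ge 2^{-|V|}e^{-\beta\Gamma}$, and the choice $\beta_t=\log t/\Gamma$ is calibrated precisely so that the resulting Dobrushin contraction coefficients satisfy $\prod_t(1-1/t)=0$, i.e.\ the Hajek divergence condition. Step (i) is also correctly identified: the SCA invariant law is the marginal of a two-replica Gibbs measure, and the eigenvalue hypothesis $q_x\ge\lambda/2$ is what forces the minimizers of the doubled Hamiltonian onto the diagonal, so that $\piSCA_\beta\to\piG_\infty$. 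Step (iii) is where you should be most careful if you ever write this out in full: the weak-ergodicity contraction from (ii) and the drift bound $\tvdist{\piSCA_{\beta_{t+1}}-\piSCA_{\beta_t}}=O(1/t)$ are both of the same order, so the telescoping does not converge trivially; one needs the more refined interleaving argument (as in Hajek or the treatment in \cite{SCA21}) rather than a naive triangle-inequality sum. With that caveat, your outline is sound.
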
 

It is important to point out that the condition on the values of the pinning parameters $\qvec = (q_x)_{x\in V}$ to be at least $\lambda / 2$ is just a sufficient condition related to a more general one originated in  \cite{osky19}. Therefore, such bound still has some room for improvement and the algorithm may still converge even if we consider smaller values for such parameters.
The importance of letting the parameters $q_x$ assume relatively large values comes from the fact that it prevents the system from flipping many spins at the same time, which helps to guarantee the convergence of the method. Although the pinning parameters restrict the system from making certain transitions, which typically slows down the dynamics, especially at low temperatures, they are necessary to avoid a certain kind of oscillatory behavior characterized by an alternation between two distinct spin configurations as the temperature drops close to zero, see \cite[Fig. 1]{SSS21}. Such oscillatory behavior tends to manifest when the pinning parameters assume values between $0$ and $\lambda /2$. This phenomenon can be widely observed, for example, in anti-ferromagnetic Ising models, due to the fact that, in this case, each spin tends to misalign with its neighbors, in such a way that,  if the pinning parameters are not set appropriately, it may result in an oscillation where the majority of spins are simultaneously updated to $+1$, and in the next step, the majority of the spins are updated to $-1$, and so on. Similarly, for the $\vep$-SCA, if the parameter $\vep$ takes values in an interval close to $1$, more spin-flips per update are allowed, and its behavior tends to be similar to the behavior of an SCA without pinning parameters, which in some cases may also introduce the same oscillatory behavior. For that reason, more theoretical studies are necessary in order to derive an analogous assumption for the parameter $\vep$
 so that we can have a theoretical limit for its value below which the convergence is assured.

\section{Simulations}

In this section we exemplify the application of the algorithms presented in Section \ref{sec:intro} in the search for ground states corresponding to three famous problems:
\begin{enumerate}
\item \emph{Spin glasses.}  Let us consider a spin glass Hamiltonian in a complete graph with $N$ vertices, without external fields (i.e., $h_x =0$), where the values for the spin-spin couplings $J_{x,y} = J_{y,x}$  are realizations of i.i.d. random variables. Two cases are analyzed: the Gaussian case, where each $J_{x,y} $ is a realization of a  standard normal random variable, and the Bernoulli case, where $\mathbb{P}(J_{x,y} = +1) = 1 - \mathbb{P}(J_{x,y} = -1) = p$.

\item \emph{Max-cut problem.} The Hamiltonian is defined in an Erd\" os-R\' enyi random graph with $N$ vertices and edge probability $p$, without external fields, and spin-spin coupling  satisfying $J_{x,y} = -1$ if $\{x,y\}$ is an edge of the graph and $J_{x,y} =0$ otherwise.  

\item \emph{Traveling salesman problem (TSP).} The Hamiltonian considered corresponds to the problem of finding the shortest path that connects $n$ cities, visiting each city only once, and returning to the initial point, see \cite{l14} for more details.

\end{enumerate}

Despite the fact one of our theoretical results (Theorem \ref{thm:SAforSCA}) and some of the well-established theorems on simulated annealing (e.g., \cite{h88}) ensure the convergence of certain annealing algorithms to the ground states of a Hamiltonian when the temperature decreases at a logarithmic rate towards zero, practical simulations involving parallel spin updates have been successfully applied by decreasing the temperature exponentially fast, e.g., \cite{SSS21, osky19, statica}. 

In the following, we consider the problems described above in the particular case where the underlying graph contains $N=100$ vertices and compare the performances of simulated annealing algorithms based on the $\vep$-SCA, SCA (with pinning parameters $q_x = \lambda / 2$) and Glauber dynamics.
For each comparison, we ran $1000$ trials for each algorithm, where each trial consisted of taking an initial spin configuration uniformly at random and applying $10^4$ Markov chain steps with the exponential temperature cooling schedule given by
\begin{equation}
\beta_t = \beta_0 \exp{(\alpha t)},
\end{equation}
where $\beta_0 = \alpha = 10^{-3}$. After that, the histogram of the smallest energy reached within each trial was generated. In the particular case where we analyze the TSP,  for the sake of simplicity, we considered only $n = 10$ cities, that is, $N = n^2 = 100$. The distances between cities $i$ and $j$,  namely $d_{i, j} = d_{j , i}$, were randomly selected integers between $1$ and $100$, and the parameters $A$ and $B$ (see \cite{l14}) were chosen as $A = \max_{i, j} d_{i, j}$ and $B = 1$ . 

The histograms respective to each studied case are illustrated in Figs. \ref{tab:hist1} and \ref{tab:hist2}, and results of the simulations are summarized in Table \ref{tab:SR}.

\begin{figure}
        \centering   
        \begin{subfigure}[b]{0.45\textwidth}
                \includegraphics[width=\textwidth]{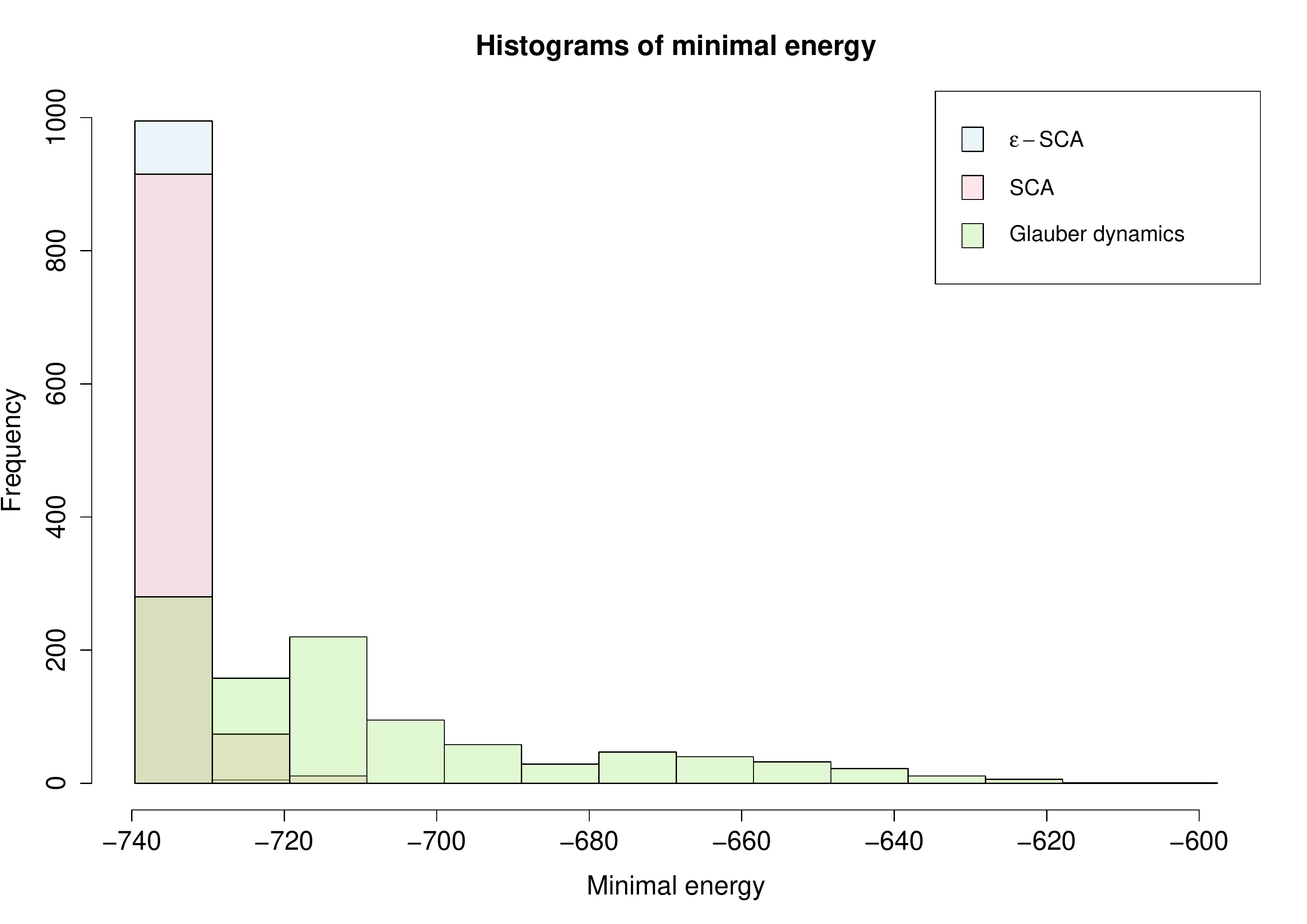}
                \caption{Gaussian case, where $\vep = 0.9$}
                \label{fig:GSG}
        \end{subfigure}
        \begin{subfigure}[b]{0.45\textwidth}
                \includegraphics[width=\textwidth]{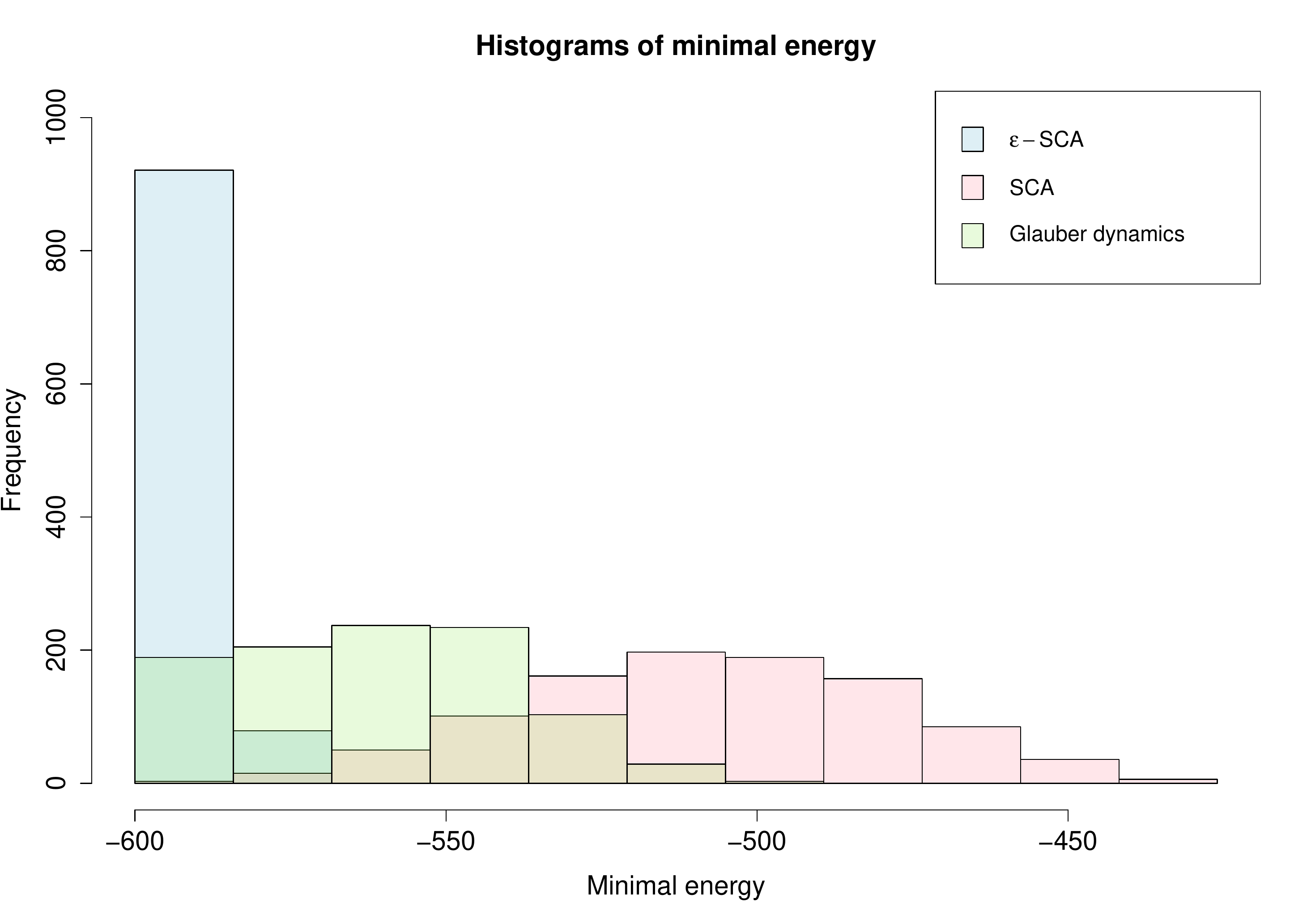}
                \caption{Bernoulli case with $p=0.2$,  where $\vep = 0.35$}
                \label{fig:BSG1}
        \end{subfigure}
        \begin{subfigure}[b]{0.45\textwidth}
                \includegraphics[width=\textwidth]{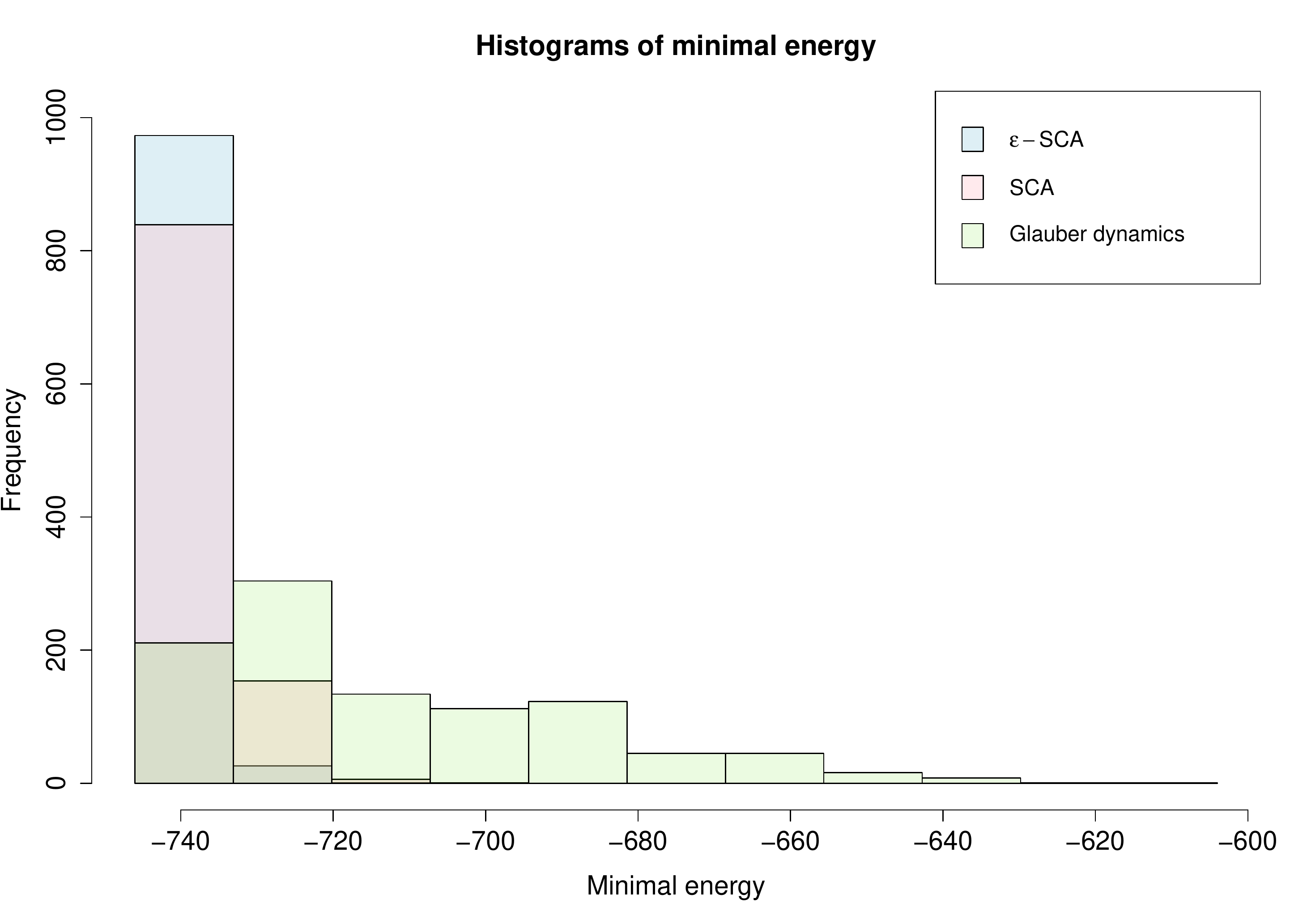}
                \caption{Bernoulli with $p=0.5$,  where $\vep = 0.75$}
                \label{fig:BSG2}
        \end{subfigure}
    \caption{Histograms obtained by using the $\vep$-SCA, SCA, and Glauber dynamics for spin glasses on the complete graph with $N = 100$.}
    \label{tab:hist1}
\end{figure}

\begin{figure}
        \centering   
        \begin{subfigure}[b]{0.45\textwidth}
                \includegraphics[width=\textwidth]{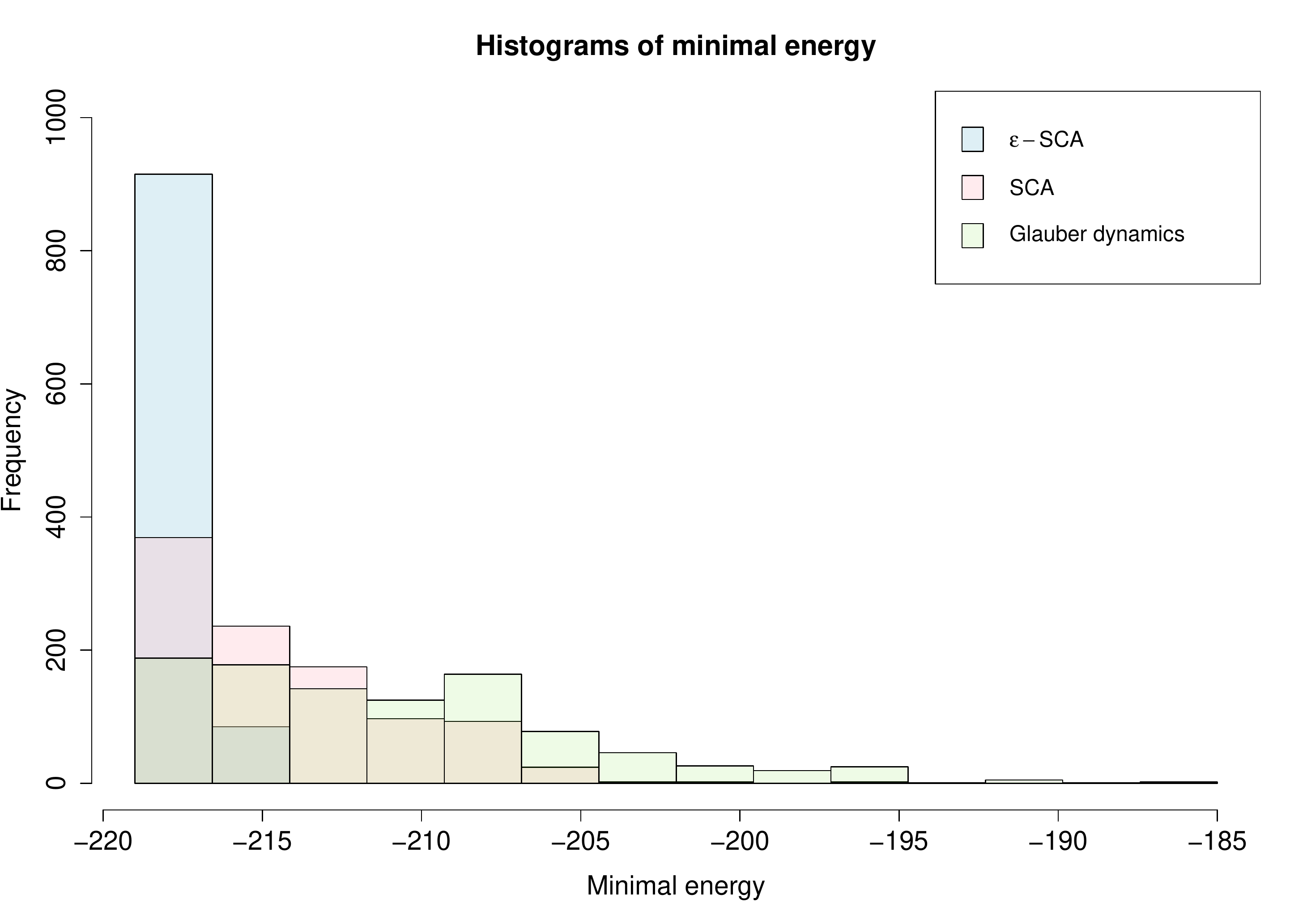}
                \caption{Max-cut problem with $p=0.1$, where $\vep = 0.6$}
                \label{fig:MC1}
        \end{subfigure}
        \begin{subfigure}[b]{0.45\textwidth}
                \includegraphics[width=\textwidth]{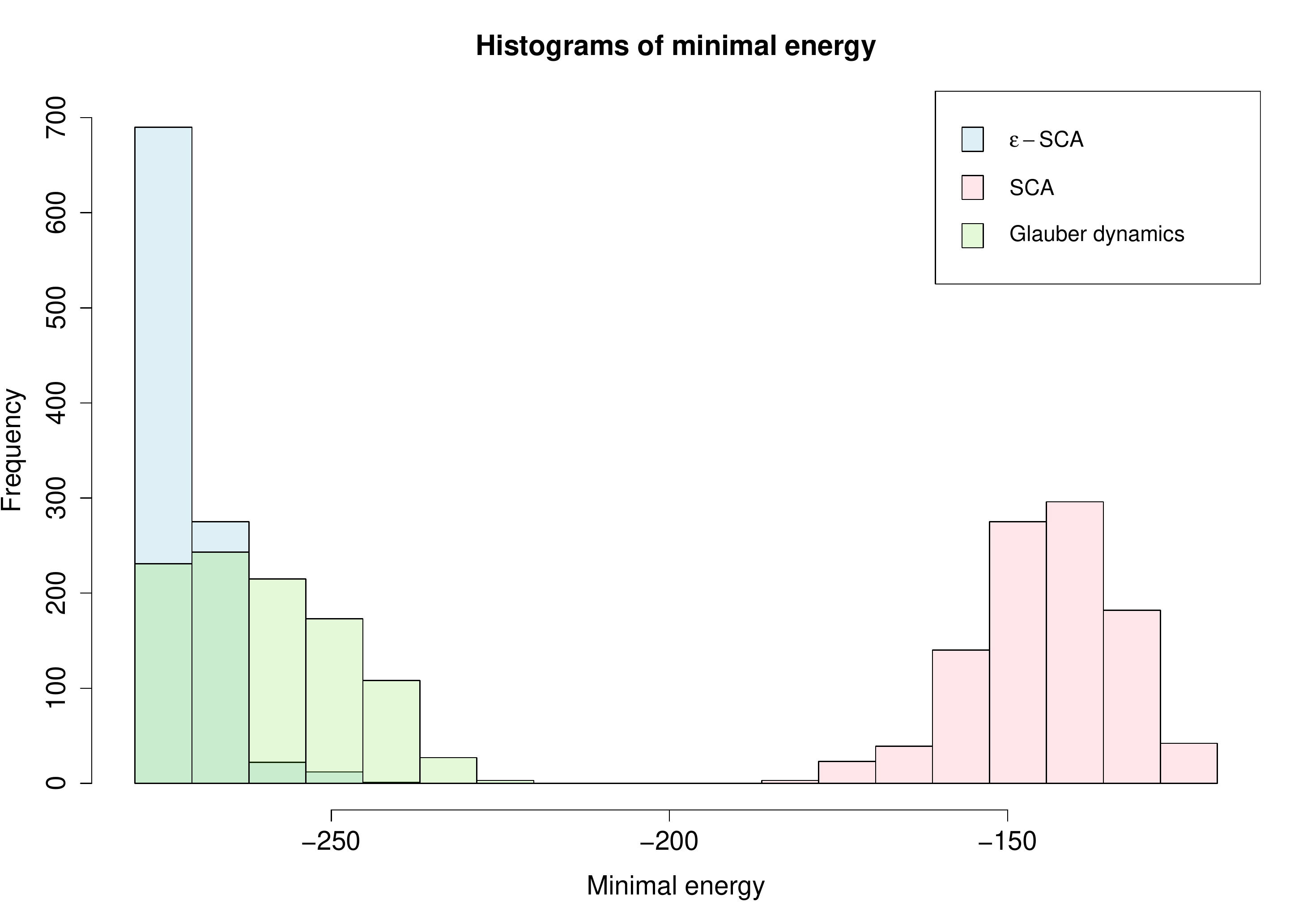}
                \caption{Max-cut problem with $p=0.9$, where $\vep = 0.1$}
                \label{fig:MC2}
        \end{subfigure}
                \begin{subfigure}[b]{0.45\textwidth}
                \includegraphics[width=\textwidth]{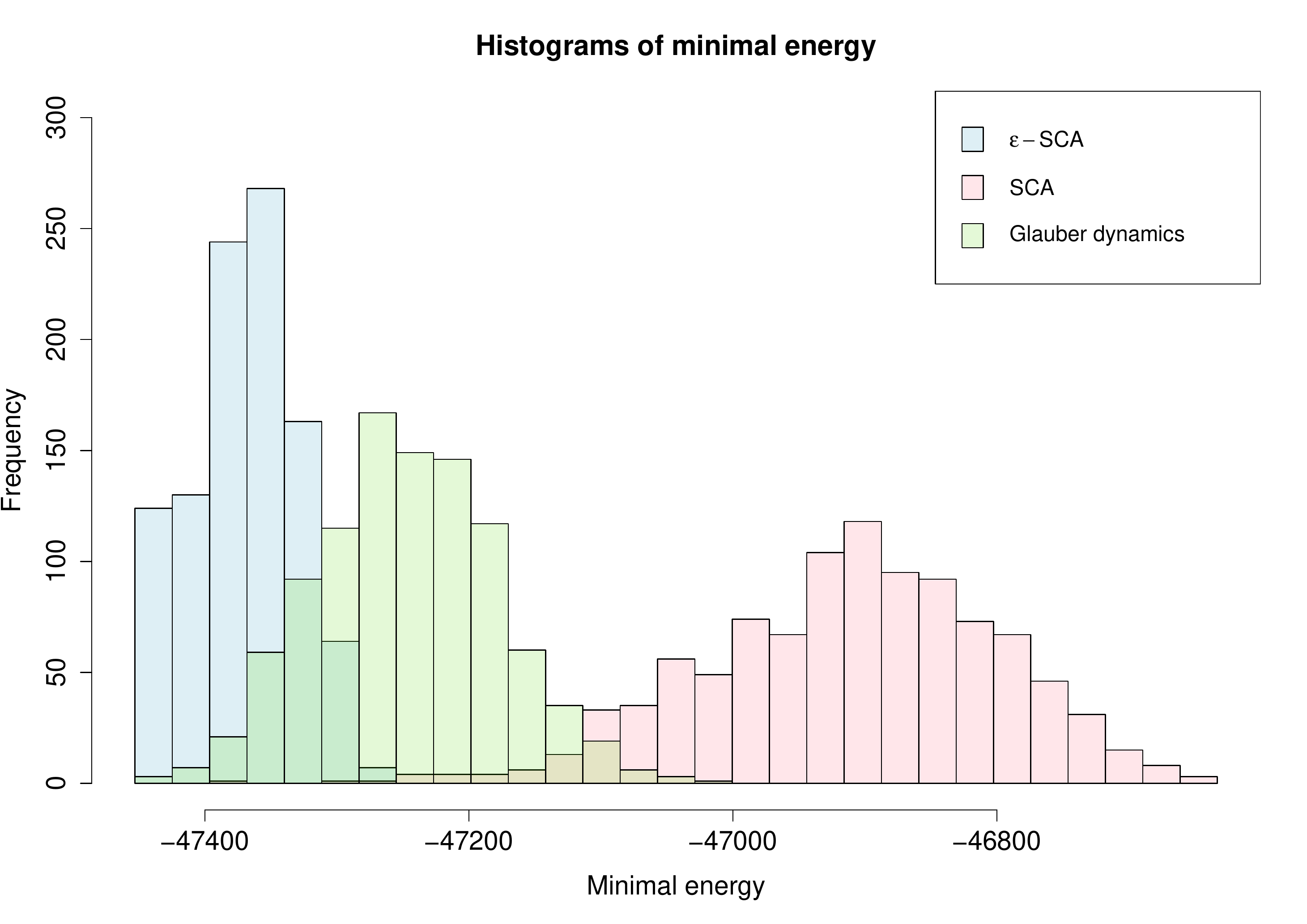}
                \caption{TSP, where $\vep = 0.3$}
                \label{fig:TSP1}
        \end{subfigure}
    \caption{Histograms obtained through the $\vep$-SCA, SCA, and Glauber dynamics applied to max-cut problems on Erd\" os-R\'enyi random graphs and the TSP, where $N = 100$.}
    \label{tab:hist2}
\end{figure}

\begin{table}
\caption{Success rate of obtaining a ground state associated to different Hamiltonians by using $\vep$-SCA, SCA, and Glauber dynamics}
\label{tab:SR}
\centering
\resizebox{\columnwidth}{!}{%
\begin{tabular}{|c|c|c|c|c|}
\hline
Models                                                                  & $\min(H)$ & $\vep$-SCA & SCA    & \begin{tabular}[c]{@{}c@{}}Glauber \\ dynamics\end{tabular} \\ \hline
\begin{tabular}[c]{@{}c@{}}Gaussian \\ spin glass\end{tabular}          & $-739.5749$ & $89.5\%$     & $31.5\%$ & $5.5\%$                                                       \\ \hline
\begin{tabular}[c]{@{}c@{}}Bernoulli\\ spin glass\\ ($p = 0.2$)\end{tabular} & $-600$  & $88.3\%$ & $0\%$    & $5.5\%$ \\ \hline
\begin{tabular}[c]{@{}c@{}}Bernoulli\\ spin glass\\ ($p = 0.5$)\end{tabular} & $-746$  & $42.9\%$ & $22.6\%$ & $3.6\%$ \\ \hline
\begin{tabular}[c]{@{}c@{}}Bernoulli\\ spin glass\\ ($p = 0.8$)\end{tabular} & $-2944$ & $100\%$  & $100\%$  & $100\%$ \\ \hline
\begin{tabular}[c]{@{}c@{}}Max-cut \\ problem \\ ($p = 0.1$)\end{tabular}   & $-219$  & $73.3\%$ & $11.6\%$ & $5\%$   \\ \hline
\begin{tabular}[c]{@{}c@{}}Max-cut \\ problem \\ ($p = 0.9$)\end{tabular} & $-279$      & $52.2\%$     & $0\%$      & $8.7\%$                                                       \\ \hline
TSP                                                                     &      -47453     &       $3.8\%$     & $0\%$    & $0.1\%$                                                         \\ \hline
\end{tabular}
}
\end{table}

\section{Discussion}

Through the inspection of Figs. \ref{tab:hist1} and \ref{tab:hist2} and the analysis of Table \ref{tab:SR}, we immediately observe that the simulated annealing algorithm based on $\vep$-SCA is the one which offered the highest success rates in obtaining ground states in all considered scenarios, being consistent with our previous findings in \cite{SCA21, SSS21}.
Since the study of Gaussian spin glasses and the max-cut problem on Erd\" os-R\'enyi random graphs was already addressed in \cite{SSS21}, let us solely concentrate on the analysis of Bernoulli spin glasses and the TSP.

In the minimization problem of Bernoulli spin glass Hamiltonians and in the TSP, we verified a phenomenon that is similar to one previously observed in the max-cut problem on Erd\" os-R\' enyi random graphs, where we noticed that the SCA struggles in obtaining low energy states when antiferromagnetic spin-spin interactions (i.e., when $J_{x,y} < 0$) are more prevalent than ferromagnetic interactions (i.e., when $J_{x,y} > 0$). In these cases, the pinning parameters, homogeneously taken as $q_x = \lambda / 2$, prevent the system from reaching certain low-energy configurations. Therefore, in order to overcome such a limitation, finding optimal values for the pinning parameters by refining the results from \cite{osky19} or increasing the simulation times by letting the temperature decrease at a slower rate can serve as alternatives for improving the accuracy of the SCA. In the cases where such a phenomenon described above was not present, the SCA reached ground states at a higher rate compared to the Glauber dynamics.

The $\vep$-SCA was applied to Bernoulli spin glasses with probabilities $p$ chosen as $p = 0.2, 0.5$ and $0.8$, considering the values for $\vep$ equal $\vep = 0.35, 0.75$ and $1$, respectively. For the case $p = 0.2$ (resp. $p = 0.8$), where antiferromagnetic (resp. ferromagnetic) interactions are dominant, the $\vep$-SCA obtained a higher success rate in finding a ground state of $88.3\%$ (resp. $100\%$). On the other hand, for the case $p = 0.5$, where the number of ferromagnetic and antiferromagnetic interactions are expected to be the same, the performance of the $\vep$-SCA was not as high as in the other two cases. For such a specific model, a ground state seems to be a spin configuration whose number of $+1$ and $-1$ spins are nearly the same, whereas there is a large number of configurations sharing that same feature whose energy is close to the energy of the ground state. So, for some reason related to the dispositions of the ferromagnetic and antiferromagnetic interactions, there is an increased chance for the dynamics of getting trapped in one of these configurations and not converging to the ground state. Again, considering slower decreasing temperature schedules may also improve the accuracy of the algorithm.

Although the success rate obtained for the $\vep$-SCA when applied to solve the TSP turned out to be smaller compared to the obtained when applied to the other problems, this value is still greater than those obtained when SCA and Glauber dynamics were applied instead. The $\vep$-SCA, with $\vep = 0.3$, and the Glauber dynamics obtained the same lowest energy configuration whose energy was equal $-47453$, corresponding to a trajectory with length equal $97$, with success rates of $3.8\%$ and $0.1\%$, respectively. On the other hand, the lowest energy configuration obtained for the SCA had energy equal to $-47369$, which corresponded to a trajectory with a length equal to $181$. Since the TSP can be expressed as the minimization problem of an Ising Hamiltonian with antiferromagnetic interactions and negative external fields, its energy landscape has been revealed to be much more complex compared to the other Hamiltonians. Such fact leads the algorithm to get stuck in configurations that are not ground states if we do not allow the temperature to drop at a sufficiently low speed. Moreover, the choice of the parameters $A$ and $B$ from its Hamiltonian may also affect the performance of the algorithms since a large value for the ratio $A/B$ may increase the chances of the dynamics stopping in a configuration that corresponds to a possible trajectory for the salesman but with non-minimal length, however, no general method for determining their optimal values is known. As an attempt to mitigate the difficulty of reaching a ground state at a higher rate without having to increase simulation times, we compared the performance of the $\vep$-SCA with the so-called Digital Annealer's Algorithm (often referred to as DA) \cite{A2019} applied to the TSP.  Such a comparison was performed under the same conditions as before, where the latter algorithm obtained the same ground state with a success rate of $8 \%$, see Fig. \ref{fig:TSP2}. In that case, the DA obtained not only a ground state at a higher rate, but also obtained low energy states more frequently. It follows that we found a particular problem where $\vep$-SCA's performance can be surpassed by a single spin-flip algorithm. Even though the DA offers superior performance, there is still no theoretical explanation for the reason why this algorithm performs better, specifically in this kind of problem. 

\begin{figure}
\centering
\includegraphics[width=0.45\textwidth]{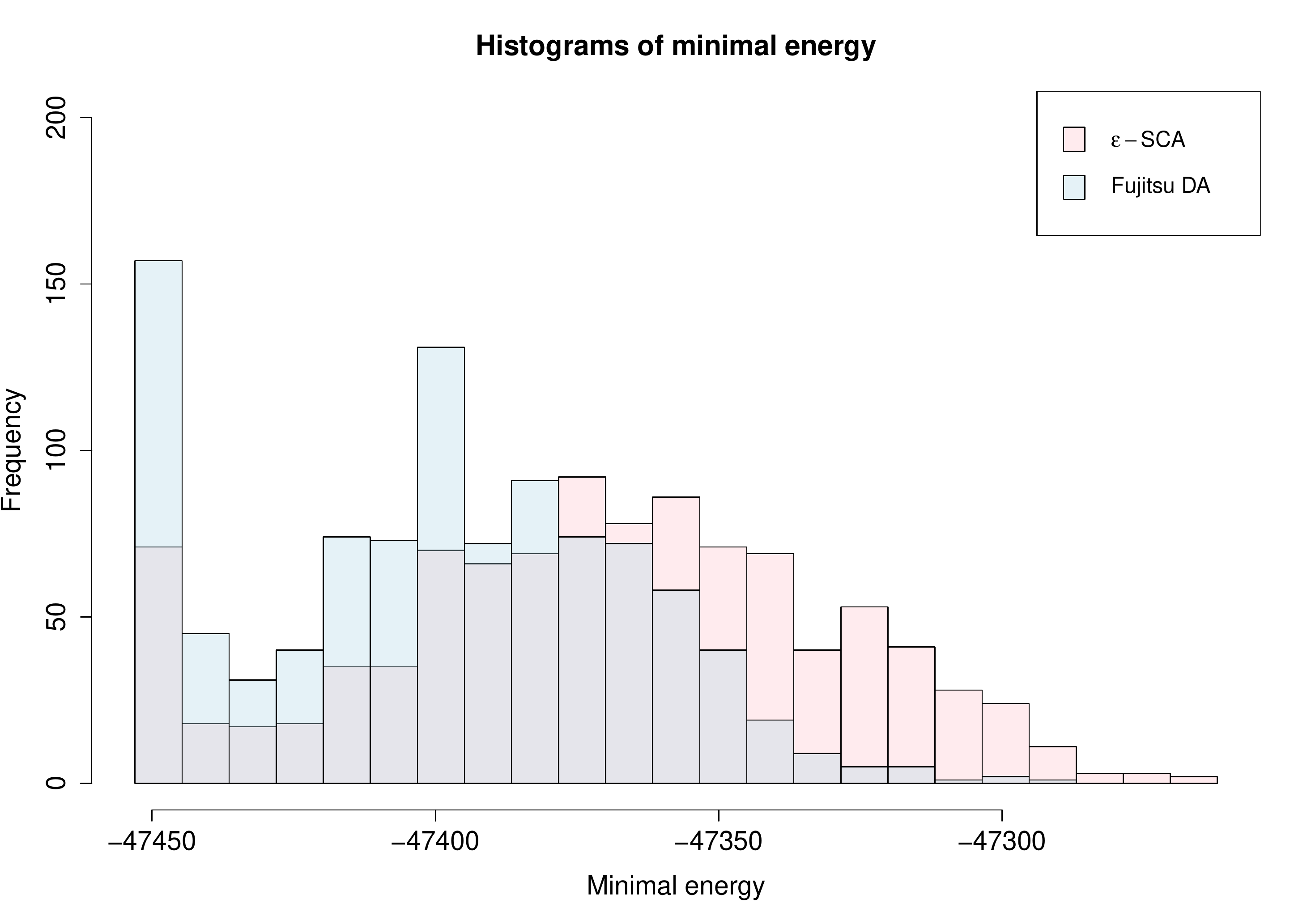}
\caption{Histograms corresponding to the application of the $\vep$-SCA (with $\vep = 0.3$) and the DA to the TSP.}\label{fig:TSP2}
\end{figure}

Similarly as in \cite{SSS21}, let us address the topic regarding the optimal value of $\vep$ for the $\vep$-SCA that should be chosen in order to maximize the success rate of reaching a ground state. 
The simulations were performed by considering the same Bernoulli spin glass Hamiltonians, with the same cooling schedule and simulation time as those used previously. In the plots from Fig. \ref{fig:SRMC}, each point that corresponds to a certain value of $\vep$  represents the success rate for the $\vep$-SCA in obtaining a ground state within $1000$ trials. Let us mention that in the case $p = 0.8$ the algorithm converged to the ground state with a success rate of $100\%$ for all tested values of $\vep$ in the interval $(0,1]$, for that reason, we did not illustrate the plot corresponding to this case. On the other hand, in Figs. \ref{fig:SRMC_1} and \ref{fig:SRMC_2}, we observe a growth tendency of such a rate as $\vep$ increases, but an  abrupt decrease appears as $\vep$ assumes values in a region close to $1$, which corresponds to the region where oscillation occurs. Note that for the Bernoulli spin glass with $p = 0.2$ such decrease starts occurring for smaller values of $\vep$ in comparison with the case $p = 0.5$ due to the fact that anti-ferromagnetic interactions dominate for smaller values of $p$ and the system becomes more susceptible to oscillate at low temperatures provided a larger number of spin-flips are allowed. Furthermore, as we consider larger values for $p$ so that ferromagnetic interactions play a more significant role in the Hamiltonian, the ground state tends to approach the configuration whose spins are all the same (either all $+1$ or all $-1$), and, in that case, the algorithm will converge to the ground states independently on the value of $\vep$ in the interval $(0,1]$.

\begin{figure}[ht]
        \centering   
        \begin{subfigure}[b]{0.45\textwidth}
                \includegraphics[width=\textwidth]{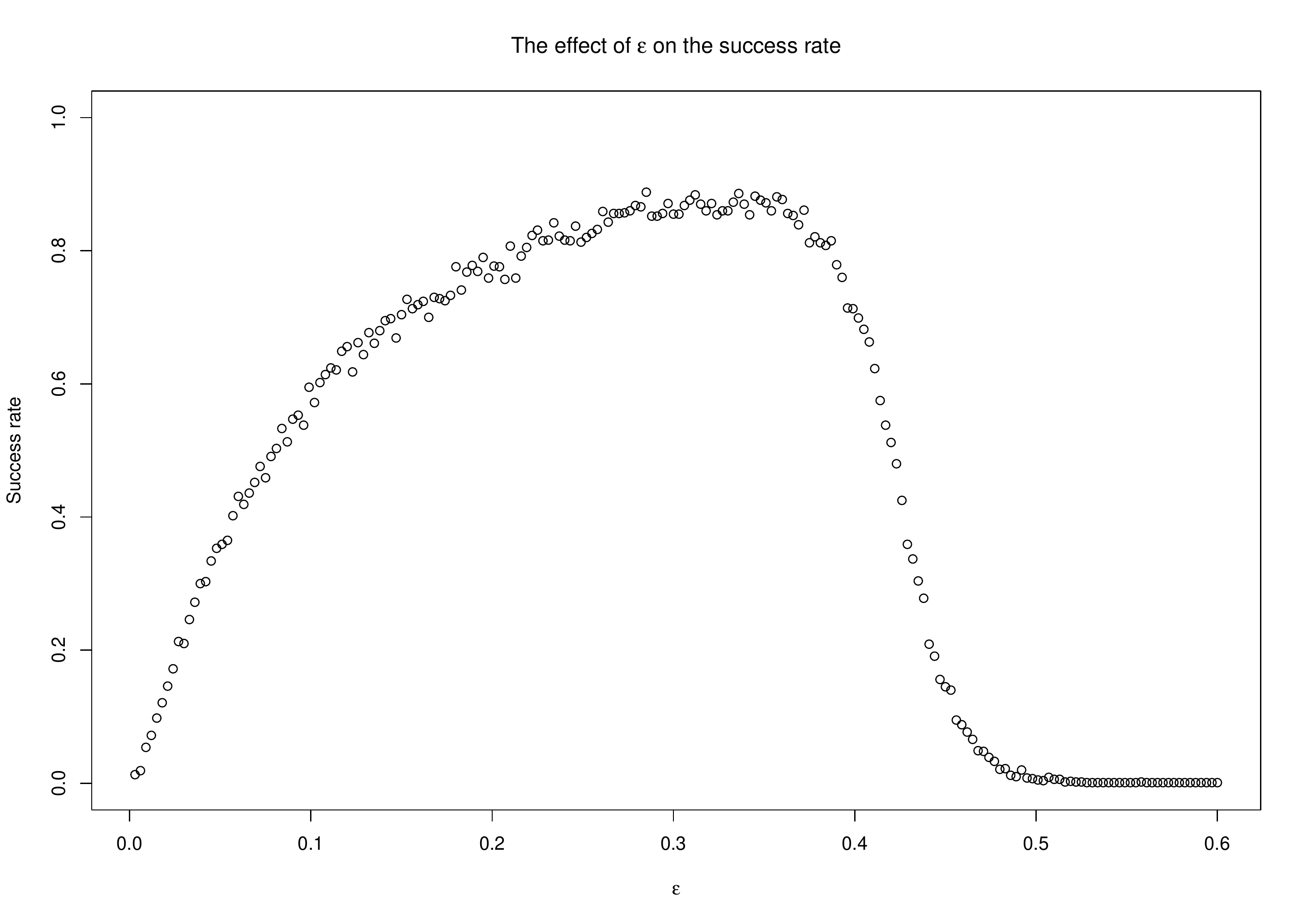}
                \caption{$N=100$ and $p = 0.2$}
                \label{fig:SRMC_1}
        \end{subfigure}
        \begin{subfigure}[b]{0.45\textwidth}
                \includegraphics[width=\textwidth]{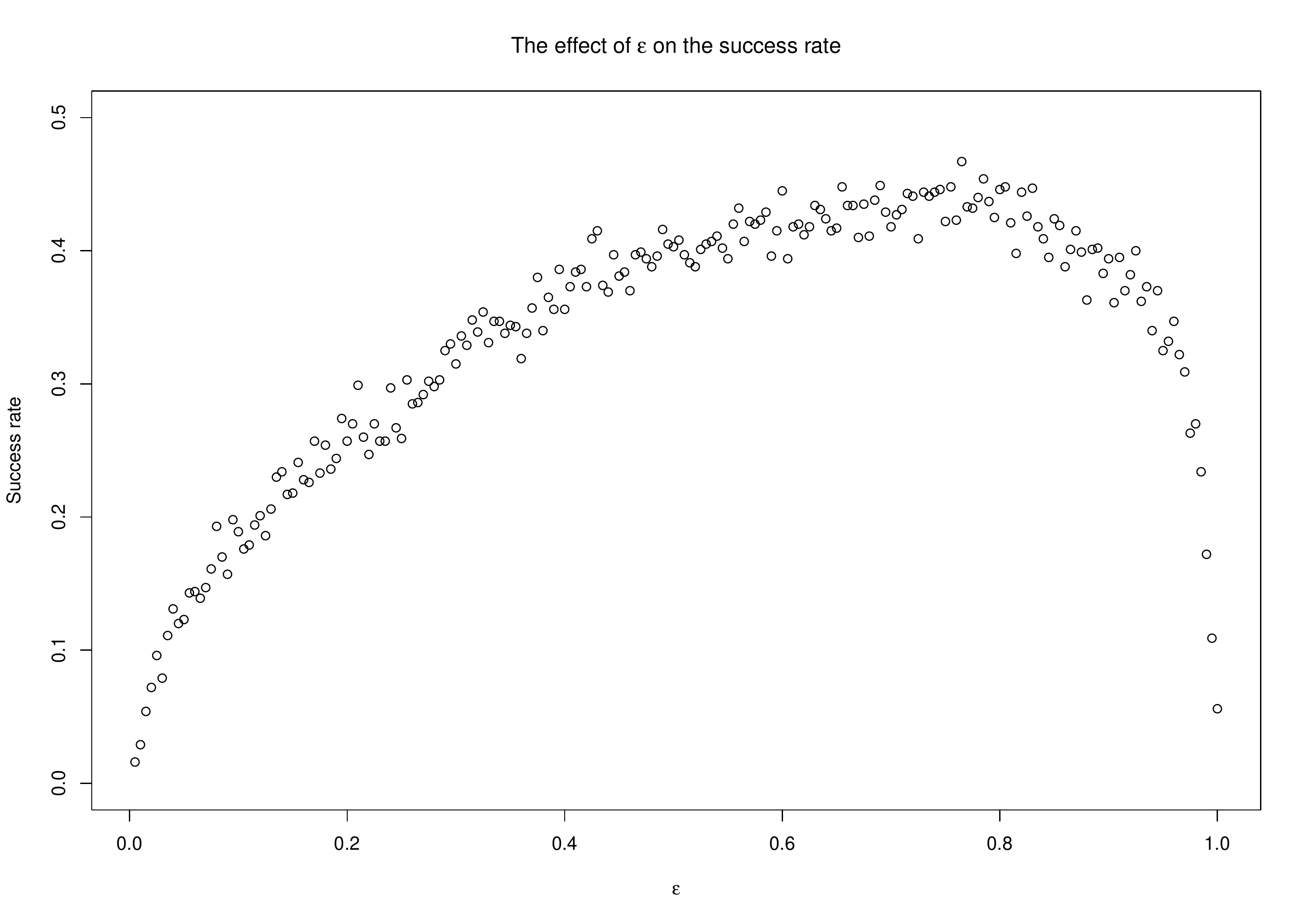}
                \caption{$N = 100$ and $p = 0.5$}
                \label{fig:SRMC_2}
        \end{subfigure}
    \caption{The dependence on $\vep$ of the success rate of finding a ground state for the Bernoulli spin glass Hamiltonian.}\label{fig:SRMC}
\end{figure}

\section{Conclusion}

In this work we extended the preliminary results showed in \cite{SSS21} by providing rigorous and practical results that can serve as a justification for the employment of simulated annealing algorithms based on parallel spin-flip updates for obtaining ground states. As a theoretical addition, it was possible to show that the mixing time for the $\vep$-SCA as well as for the SCA is at most proportional 
to $\log |V|$ provided the temperature is sufficiently high. 

The applications of simulated annealing for the $\vep$-SCA, SCA, and Glauber dynamics were illustrated by considering the minimization problems of Gaussian and Bernoulli spin glass Hamiltonians, the max-cut problem on  Erd\" os-R\' enyi random graphs, and the traveling salesman problem (TSP). The SCA outperformed the Glauber dynamics in some cases but failed in obtaining ground states for certain models where antiferromagnetic interactions play a significant role such as in max-cut problems on Erd\" os-R\' enyi random graphs with values for $p$ in an interval close to 1, Bernoulli spin glasses with $p$ in an interval close to $0$, and the TSP. However, the $\vep$-SCA exhibited the largest success rates in all scenarios, being consistent with our previous findings \cite{SCA21, SSS21}.

Even though the $\vep$-SCA performed significantly better than the other two algorithms in all cases, its hypothetical advantage of allowing a large number of spin flips and not suffering from the effect of pinning parameters did not guarantee a large success rate for the TSP. Due to its complexity, in order to improve the results for the TSP, larger simulation times with slowly decreasing temperatures are required. Moreover, for this specific class of problems, the so-called Digital Annealer's Algorithm (DA) showed to be more efficient than $\vep$-SCA. For that reason, theoretical and practical aspects concerning the properties of the DA are worth investigating.

The next step in future investigations consists of theoretically justifying the efficiency and limitations of the use of exponential cooling schedules for the SCA as well as for the $\vep$-SCA, aiming at providing some theoretical background that can be decisive in real-world applications. In order to gain more information that may allow us to classify the algorithms according to their appropriateness in solving certain classes of problems, more simulations considering a larger variety of problems including larger values of $N$ will be necessary. In order to do so, the employment of hardware accelerators such as GPUs are being considered, and a brief evaluation has been reported in \cite{SCA21}.


\begin{thebibliography}{9}
\bibitem{A2019} M. Aramon, G. Rosenberg, E. Valiante, T. Miyazawa, H. Tamura, H.G. Katzgraber.
 \newblock Physics-Inspired Optimization for Quadratic Unconstrained Problems Using a Digital Annealer.
 \newblock \emph{Frontiers in Physics}, \textbf{7} (2019).

\bibitem{BGJR88} F. Barahona, M. Gr\" otschel, M. J\" unger, G. Reinelt.
\newblock An Application of Combinatorial Optimization to Statistical Physics and Circuit Layout Design.
\newblock \emph{Operations Research}, \textbf{36} (1988) 493–513.

\bibitem{Cat92} O. Catoni.
\newblock Rough Large Deviation Estimates for Simulated Annealing: Application to Exponential Schedules.
\newblock \emph{The Annals of Probability}, \textbf{20} (1992): 1109–1146.

\bibitem{c85}V. \v{C}ern\'{y}.
\newblock Thermodynamical approach to the traveling salesman problem: 
an efficient simulation algorithm.
\newblock \emph{J. Optimiz. Theory App.}, \textbf{45} (1985): 41--51.

\bibitem{DP11} J. Ding and Y. Peres.
\newblock Mixing time for the Ising model: A uniform lower bound for all graphs.
\newblock \emph{Annales de l’Institut Henri Poincaré, Probabilités et Statistiques}, \textbf{47} (2011) 1020–1028.

\bibitem{SCA21}B.H. Fukushima-Kimura, S. Handa, K. Kamakura, Y. Kamijima, K. Kawamura, A. Sakai.
\newblock Mixing time and simulated annealing for the stochastic cellular automata.
\newblock arXiv preprint arXiv:2007.11287. 

\bibitem{SSS21}B.H. Fukushima-Kimura, Y. Kamijima, K. Kawamura and A. Sakai.
\newblock Stochastic optimization via parallel dynamics: rigorous results and simulations.
\newblock \emph{Proceedings of the ISCIE International Symposium on Stochastic Systems Theory and its Applications}, 2022, 65-71.

\bibitem{gj79}M.R. Garey and D.S. Johnson.
\newblock Computers and Intractability: A Guide to the Theory of NP-Completeness.
\newblock W.H. Freeman and Company, San Francisco (1979). 

\bibitem{g63}R.J. Glauber.
\newblock Time-dependent statistics of the Ising Model.
\newblock \emph{J. Math. Phys.}, \textbf{4} (1963): 294--307.

\bibitem{GAT21}H. Goto, K. Endo, M. Suzuki, Y. Sakai, T. Kanao, Y. Hamakawa, R. Hidaka, M. Yamasaki, K. Tatsumura.
\newblock High-performance combinatorial optimization based on classical mechanics.
\newblock \emph{Science Advances}, \textbf{7} (2021) eabe7953.

\bibitem{GTD19} H. Goto, K. Tatsumura, A.R. Dixon.
\newblock Combinatorial optimization by simulating adiabatic bifurcations in nonlinear Hamiltonian systems.
\newblock \emph{Science Advances}, \textbf{5} (2019) eaav2372.


\bibitem{h88}B.~Hajek.
\newblock Cooling schedules for optimal annealing.
\newblock \emph{Math. Oper. Res.}, \textbf{13} (1988): 191--376.

\bibitem{hkks19}S. Handa, K. Kamakura, Y. Kamijima and A. Sakai.
\newblock Finding optimal solutions by stochastic cellular automata.
\newblock Preprint. arXiv:1906.06645.

\bibitem{HS07} T.P. Hayes and A. Sinclair
\newblock  A general lower bound for mixing of single-site dynamics on graphs.
\newblock  \emph{The Annals of Applied Probability}, \textbf{17} (2007) 931–952.

\bibitem{lp17}D.A. Levin and Y. Peres.
\newblock Markov Chains and Mixing Times, second edition.
\newblock AMS, Providence, Rhode Island (2017).

\bibitem{l14} A. Lucas.
\newblock Ising formulations of many NP problems.
\newblock \emph{Front. Phys.}, \textbf{12} (2014): https://doi.org/10.3389/fphy.2014.00005.


\bibitem{osky19} T. Okuyama, T. Sonobe, K. Kawarabayashi, and M. Yamaoka.
\newblock Binary optimization by momentum annealing.
\newblock \emph{Phys. Rev. E}, \textbf{100} (2019), 012111 .


\bibitem{dss12}P. Dai Pra, B. Scoppola and E. Scoppola.
\newblock Sampling from a Gibbs measure with pair interaction by means of PCA.
\newblock \emph{J. Stat. Phys.}, \textbf{149} (2012): 722--737.

\bibitem{Drug} H. Sakaguchi, K. Ogata, T. Isomura, S. Utsunomiya, Y. Yamamoto, K. Aihara.
\newblock Boltzmann sampling by degenerate optical parametric oscillator network for structure-based virtual screening. 
\newblock \emph{Entropy}, \textbf{18} (2016).

\bibitem{st18}B. Scoppola and A. Troiani.
\newblock Gaussian mean field lattice gas.
\newblock \emph{J. Stat. Phys.}, \textbf{170} (2018): 1161--1176.

\bibitem{statica} K. Yamamoto, K. Kawamura, K. Ando, N. Mertig, T. Takemoto, M. Yamaoka, H. Teramoto, A. Sakai, S. Takamaeda-Yamazaki, M. Motomura. 
\newblock STATICA: A 512-Spin 0.25M-Weight Annealing Processor With an All-Spin-Updates-at-Once Architecture for Combinatorial Optimization With Complete Spin–Spin Interactions. 
\newblock \emph{IEEE Journal of Solid-State Circuits}, \textbf{56} (2021): 165--178.













\end{thebibliography}
\end{document}